\newtheorem*{thm*}{Theorem}
\newtheorem*{lem*}{Lemma}
\newtheorem*{prop*}{Proposition}
\newtheorem*{cor*}{Corollary}
\theoremstyle{definition}
\newtheorem*{example*}{Example}
\theoremstyle{remark}
\newtheorem*{rmk*}{Remark}
\numberwithin{equation}{subsection}
\newcommand{\divided}[2]{#1^{(#2)}}
\newcommand{\UU}{\mathbf{U}}
\newcommand{\Sch}{\mathbf{S}}
\newcommand{\bil}[2]{\langle #1, #2 \rangle}
\renewcommand{\ge}{\geqslant}
\newcommand{\phat}{\widehat{p}}
\begin{document}
\title[Quantized enveloping algebras via inverse limits] {Addendum to:\\
  ``Constructing quantized enveloping algebras via inverse limits of
  finite dimensional algebras''}

\author[S.~Doty]{Stephen Doty}
\address{Department of Mathematics and Statistics\\ 
Loyola University Chi\-cago\\ 
Chicago, Illinois 60626 USA}
\thanks{Supported by a Mercator grant from the DFG}
  \email{doty@math.luc.edu}

\date{17 October 2009}

\begin{abstract} 
It is shown that the question raised in Section 5.7 of \cite{IL} has
an affirmative answer.
\end{abstract}
\maketitle

\parskip=2pt 
\allowdisplaybreaks 


\noindent
We use the notation and numbering from \cite{IL}. In Section 5.7 we
raised the question: does ${}_R \UU$ embed in ${}_R \widehat{\UU}$?
The purpose of this addendum is to show that the answer is
affirmative. To be precise, we have the following result.

\begin{thm*}
  The map ${}_R\theta \colon {}_R \UU \to {}_R \widehat{\UU}$ defined
  in 5.7(a) is injective. Hence, ${}_R \UU$ is isomorphic
  to the $R$-subalgebra of ${}_R \widehat{\UU}$ generated by all
  $\widehat{E}_{\pm i}^{(m)}$ ($i \in I$, $m \ge 0$) and
  $\widehat{K}_h$ ($h \in Y$).
\end{thm*}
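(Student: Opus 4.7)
The plan is to reduce injectivity of ${}_R\theta$ to the analogous statement over a field, which is classical, by means of a functorial commutative square and a short diagram chase.

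Let $K$ be a field containing $R$ (or, if $R$ is already a field, take $K=R$). Since ${}_R\UU$ is a free $R$-module on the Lusztig integral basis, which extends to a $K$-basis after base change, the canonical map $\iota \colon {}_R\UU \to {}_K\UU$ is injective. Next, the levelwise base-change maps ${}_R\Sch(\pi) \to {}_K\Sch(\pi)$ of the finite-dimensional Schur-type quotients defining the two completions are compatible across the inverse system indexed by saturated sets $\pi$ of dominant weights, so by the universal property of inverse limits they induce a canonical $R$-algebra homomorphism $\widehat{\iota}\colon {}_R\widehat{\UU} \to {}_K\widehat{\UU}$. Because both ${}_R\theta$ and ${}_K\theta$ are assembled from the same family of evaluation maps applied to the same finite-dimensional modules, merely with different scalar rings, the square
\[
\xymatrix{
{}_R\UU \ar[r]^{{}_R\theta} \ar[d]_{\iota} & {}_R\widehat{\UU} \ar[d]^{\widehat{\iota}} \\
{}_K\UU \ar[r]_{{}_K\theta} & {}_K\widehat{\UU}
}
\]
commutes.

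The diagram chase then runs as follows. If ${}_R\theta(u)=0$ for some $u \in {}_R\UU$, commutativity gives ${}_K\theta(\iota(u)) = \widehat{\iota}({}_R\theta(u)) = 0$. Since ${}_K\theta$ is injective, $\iota(u) = 0$, and hence $u = 0$ since $\iota$ is injective. The ``hence'' clause of the theorem is then automatic: ${}_R\theta$ identifies ${}_R\UU$ with its image in ${}_R\widehat{\UU}$, and by construction this image is precisely the $R$-subalgebra generated by the $\widehat{E}_{\pm i}^{(m)}$ and $\widehat{K}_h$.

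The main obstacle is the generic-case injectivity of ${}_K\theta$, which underlies everything. Over a field $K$ this amounts to showing that finite-dimensional integrable ${}_K\UU$-modules separate the elements of ${}_K\UU$; the expected argument exploits the triangular decomposition together with an integral PBW basis to show that a nonzero element acts nontrivially on some Weyl module $V_K(\lambda)$ for $\lambda$ sufficiently dominant, using that lowering-operator monomials act freely on a highest-weight vector and that elements of the Cartan part are separated by evaluation at enough weights via a Vandermonde argument. Once this generic injectivity is in hand, the construction of $\widehat{\iota}$ and the commutativity of the square are both formal consequences of the definitions in Section~5.7 of \cite{IL}.
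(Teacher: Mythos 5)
There is a genuine gap, and it occurs at the very first step. The ring $R$ in \cite{IL} is an arbitrary commutative ring with $1$ equipped with an invertible element $\xi$ (the image of $v$); such a ring need not embed into any field --- consider rings with zero divisors or nilpotents --- so the map $\iota \colon {}_R\UU \to {}_K\UU$ on which your whole reduction rests may simply fail to exist. Even when $R$ is a domain and $K$ its fraction field, the injectivity of ${}_K\theta$ that you defer to your last paragraph is not ``classical'': the known separation results (\cite[Proposition 5.11]{Jantzen:LQG}, \cite[Proposition 3.5.4]{Lusztig}) are proved over $\Q(v)$ or away from roots of unity, and the Corollary of this addendum is stated precisely as a \emph{generalization} of them. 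Your sketch of the field case also breaks down at a root of unity: the values $\xi^{\bil{h}{\lambda}}$ then range over a finite set, so a Vandermonde argument cannot separate the Cartan part (one needs the divided-power Cartan elements and a finer analysis). In effect the proposal reduces the theorem to a special case of itself that is no easier than the general case.

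The paper's proof avoids base change entirely and works uniformly over $R$: from ${}_R\theta(u)=0$ one gets $1_\lambda\,(1\otimes p_\pi)(u)\,1_\mu = 0$ for all $\pi$, transfers this to the modified form via the identity $1_\lambda\,(1\otimes p_\pi)(u)\,1_\mu = (1\otimes \dot{p}_\pi)(1_\lambda\, u\, 1_\mu)$, applies Lemma 5.2 to conclude $1_\lambda\, u\, 1_\mu \in \cap_\pi\, {}_R\dot{\UU}[\pi^c] = 0$, and finally deduces $u=0$ from the fact that the ideals $\sum_{h}(K_h-\xi^{\bil{h}{\lambda}}){}_R\UU + \sum_{h}{}_R\UU(K_h-\xi^{\bil{h}{\mu}})$ intersect to zero as $\lambda,\mu$ range over $X$. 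If you wanted to salvage your strategy you would have to prove the root-of-unity field case by essentially this argument anyway, at which point the base change buys nothing.
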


\begin{proof}
Consider the commutative diagram of $R$-algebra maps
\[
\xymatrix{
_R\widehat{\UU} \ar[dr]^{1\otimes\phat_{\pi'}} 
\ar@/_/[dddr]_{1\otimes\phat_\pi} & & 
_R\UU \ar[dl]_{1\otimes p_{\pi'}} 
\ar@/^/[dddl]^{1\otimes p_\pi}
\ar@{.>}[ll]_{_R\theta} \\ 
& _R\Sch(\pi') \ar[dd]|{1\otimes f_{\pi,\pi'}} \\ \\
& _R\Sch(\pi) 
}
\]
for any finite saturated $\pi \subset \pi'$.  The universal property
of inverse limits guarantees the existence of a unique $R$-algebra map
$_R\theta \colon {}_R \UU \to {}_R\widehat{\UU}$ making the diagram
commute, and one easily checks that this map coincides with the map
defined in 5.7(a). We need to show that ${}_R \theta$ is injective.

We note that from the definitions it follows that for any $\pi$ the
maps $p_\pi$ and $\dot{p}_\pi$ are related by the identity
$1_\lambda\, p_\pi(u)\, 1_\mu = \dot{p}_\pi( 1_\lambda\, u \, 1_\mu)$,
for any $u \in \UU$, $\lambda, \mu \in X$.  It follows immediately
that
\[
1_\lambda\, (1\otimes p_\pi)(u)\, 1_\mu = (1 \otimes \dot{p}_\pi)(
1_\lambda\, u \, 1_\mu),
\]
for any $u \in {}_R\UU$, $\lambda, \mu \in X$. This is needed below.

Let $u \in \ker {}_R \theta$ and $\lambda, \mu \in X$. Then
$\widehat{1}_\lambda \, {}_R \theta(u) \, \widehat{1}_\mu = 0$ in
${}_R\widehat{\UU}$. This implies that ${1}_\lambda \, (1\otimes
p_\pi)(u) \, {1}_\mu = 0$ in ${}_R \Sch(\pi)$ for any $\pi$, and hence
that $(1 \otimes \dot{p}_\pi)(1_\lambda\, u \, 1_\mu) = 0$ in ${}_R
\Sch(\pi)$ for any $\pi$. Thus by Lemma 5.2 we have ${1}_\lambda \, u
\, {1}_\mu \in \cap_\pi\, {}_R \dot{\UU}[\pi^c]$. Since the
intersection is zero, the equality ${1}_\lambda \, u \, {1}_\mu = 0$
holds in ${}_R \dot{\UU}$, for any $\lambda, \mu \in X$.  We claim
this implies that $u = 0$.

To see the claim we observe that the construction of $\dot{\UU}$ given
in Section 3.1 and \cite[Chapter 23]{Lusztig} commutes with change of
scalars. This is easily verified and left to the reader. It means that
${}_R\pi_{\lambda, \mu}(u) = 0$ where
\[
{}_R\pi_{\lambda, \mu} \colon {}_R \UU \to {}_R \UU/ \Big( \sum_{h \in
  Y} (K_h- \xi^{\bil{h}{\lambda}}){}_R\UU + \sum_{h \in Y}
{}_R\UU(K_h- \xi^{\bil{h}{\mu}}) \Big)
\]
is the canonical projection map. Thus it follows that $$u \in \sum_{h
  \in Y} (K_h- \xi^{\bil{h}{\lambda}}){}_R\UU + \sum_{h \in Y}
{}_R\UU(K_h- \xi^{\bil{h}{\mu}}).$$ Since this is true for all
$\lambda, \mu \in X$ it follows that $u = 0$ as claimed.
\end{proof}

From \cite[31.1.5]{Lusztig} we recall the category ${}_R\mathcal{C}$
of unital ${}_R \dot{\UU}$-modules. As in \cite[23.1.4]{Lusztig} one
easily checks that this is the same as the category of ${}_R
\UU$-modules admitting a weight space decomposition.  Following
\cite[31.2.4]{Lusztig}, we say that an object $M$ of ${}_R\mathcal{C}$
is \emph{integrable} if for any $m \in M$ there exists some $n_0$ such
that $$\divided{E_{i}}{n} \, m = 0 = \divided{E_{-i}}{n} \, m$$ for
all $n \ge n_0$.

We have the following consequence of the theorem, which generalizes
\cite[Proposition 5.11]{Jantzen:LQG} and \cite[Proposition
  3.5.4]{Lusztig}.

\begin{cor*}
  Suppose that $u \in {}_R \UU$ acts as zero on all integrable objects
  of ${}_R \mathcal{C}$. Then $u = 0$.
\end{cor*}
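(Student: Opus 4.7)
The plan is to deduce the corollary from the theorem. By injectivity of ${}_R\theta$, it suffices to show that ${}_R\theta(u) = 0$ in ${}_R\widehat{\UU}$. As ${}_R\widehat{\UU}$ is an inverse limit, it embeds in $\prod_\pi {}_R\Sch(\pi)$, so the task reduces to verifying $(1\otimes\phat_\pi)\bigl({}_R\theta(u)\bigr) = 0$ for every finite saturated $\pi \subset X$; commutativity of the diagram used in the proof of the theorem identifies that element with $(1\otimes p_\pi)(u)$, so it is enough to show $(1\otimes p_\pi)(u) = 0$ in ${}_R\Sch(\pi)$ for every such $\pi$.

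To produce this vanishing I would appeal to the construction of the generalized Schur algebra ${}_R\Sch(\pi)$ recalled earlier in \cite{IL} (compare \cite[Ch.~29]{Lusztig}): each ${}_R\Sch(\pi)$ acts faithfully on an ${}_R\UU$-module $V_\pi$ (for instance, a direct sum of the universal integrable highest-weight modules indexed by the dominant weights in $\pi$), and the quotient map $(1\otimes p_\pi)$ is precisely the action map on $V_\pi$. Because $\pi$ is finite and saturated, $V_\pi$ has only finitely many nonzero weight spaces, all with weights in $\pi$; hence for any $m \in V_\pi$ the elements $\divided{E_{\pm i}}{n}$ annihilate $m$ once $n$ is large enough to push the weight of $m$ outside $\pi$. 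Thus $V_\pi$ is an integrable object of ${}_R\catC$.

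The hypothesis on $u$ now forces $u$ to act as zero on each $V_\pi$, giving $(1\otimes p_\pi)(u) = 0$ for every finite saturated $\pi$. Combined with the first paragraph this yields ${}_R\theta(u) = 0$, and the theorem then gives $u = 0$. The only delicate point is identifying each ${}_R\Sch(\pi)$ as the image of ${}_R\UU$ acting faithfully on a module that is genuinely integrable in the strict sense of the definition preceding the corollary; this is built into the construction of the generalized Schur algebras used in \cite{IL}, so no new work is required. The argument generalizes \cite[Prop.~5.11]{Jantzen:LQG} and \cite[Prop.~3.5.4]{Lusztig} by replacing their field-theoretic faithfulness arguments with the inverse-limit injectivity of ${}_R\theta$.
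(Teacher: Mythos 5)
Your overall skeleton matches the paper's: reduce to showing $(1\otimes p_\pi)(u)=0$ for every finite saturated $\pi$, conclude via the commutative diagram that ${}_R\theta(u)=0$, and invoke the theorem. The one place you diverge is the choice of integrable module witnessing this vanishing, and that is exactly where your argument acquires an avoidable weak point. You take $V_\pi$ to be a direct sum of universal integrable highest-weight modules and assert that ${}_R\Sch(\pi)$ acts \emph{faithfully} on it over an arbitrary base ring $R$, with $1\otimes p_\pi$ identified as the action map; you dismiss this as ``built into the construction,'' but over a general commutative ring this faithfulness is a nontrivial statement about the integral form (it amounts to injectivity of ${}_R\Sch(\pi)\to\End_R(V_\pi)$ after base change) and is not something you can wave through without a citation to a specific result. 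The paper sidesteps the issue entirely by taking the integrable module to be ${}_R\Sch(\pi)$ itself under the left regular action induced by $1\otimes p_\pi$: faithfulness is then automatic (the algebra is unital, $\pi$ being finite), so $u$ acting as zero immediately gives $u\in\ker(1\otimes p_\pi)$, and the only thing to check is that the regular module is integrable, which follows from finiteness of the weights occurring in ${}_R\Sch(\pi)$ just as in your finiteness argument for $V_\pi$. I recommend replacing your $V_\pi$ by the regular module; otherwise you must actually supply the faithfulness proof, which would make this ``corollary'' longer than the theorem.
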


\begin{proof}
The natural quotient map $1 \otimes p_\pi \colon {}_R \UU \to {}_R
\Sch(\pi)$ makes ${}_R \Sch(\pi)$ into a left ${}_R \UU$-module, by
defining $u \cdot s = \overline{u}\,s$ (for $u \in {}_R\UU$, $s\in
{}_R\Sch(\pi)$) where $\overline{u}$ is the image of $u$.  It is
easily checked that, as a left ${}_R \UU$-module, ${}_R\Sch(\pi)$ is
an integrable object of ${}_R\mathcal{C}$. Hence by hypothesis $u$
acts as zero on ${}_R\Sch(\pi)$, for any finite saturated subset $\pi$
of $X^+$. It follows that $u$ lies in the intersection of the kernels
of the various $1 \otimes p_\pi$. By the commutative diagram above
this implies that ${}_R \theta(u) = 0$. By the theorem, $u = 0$. 
\end{proof}

\end{document}